\theoremstyle{plain} 
\newtheorem{theorem}{\indent\sc Theorem}[section]
\newtheorem{lemma}[theorem]{\indent\sc Lemma}
\newtheorem{corollary}[theorem]{\indent\sc Corollary}
\newtheorem{proposition}[theorem]{\indent\sc Proposition}
\theoremstyle{definition} 
\newtheorem{definition}[theorem]{\indent\sc Definition}
\newtheorem{remark}[theorem]{\indent\sc Remark}
\newtheorem{example}[theorem]{\indent\sc Example}
\newcommand{\N}{\mathbb{N}}
\newcommand{\Z}{\mathbb{Z}}
\newcommand{\R}{\mathbb{R}}
\newcommand{\C}{\mathbb{C}}
\def\address#1#2{\begingroup
\noindent\parbox[t]{7.8cm}{%
\small{\scshape\ignorespaces#1}\par\vskip1ex
\noindent\small{\itshape E-mail address}%
\/: #2\par\vskip4ex}\hfill%
\endgroup}%
\title{\uppercase{Coarse Dynamics and fixed point property}}
\author{
%
\bigskip \\
\textsc{Tomohiro Fukaya$^{*}$} 
}
\date{} 
\begin{document}

\maketitle

\footnote{ 
2000 \textit{Mathematics Subject Classification}.
Primary 53C24; Secondary 55C20.
}
\footnote{ 
\textit{Key words and phrases}. 
Coarse geometry, Higson corona, dynamics, fixed point
}
\footnote{ 
$^{*}$The author is supported by Grant-in-Aid for JSPS Fellows 
(19$\cdot$3177) from Japan Society for the Promotion
of Science.
}

\begin{abstract}
We investigate the fixed point property of the group actions on a coarse
 space and its Higson corona. We deduce the coarse version of Brouwer's fixed
 point theorem.
\end{abstract}

\section{Introduction} 
Let $X$ be a proper coarse space and $G$ be a 
finitely generated semi-group acting on $X$. 
Here a coarse
structure of $G$ is generated by the `diagonals'
\[
 \Delta_g = \{(h,hg)| h \in G\}
\]
as $g$ runs over a generating set for $G$. 
We will define the
`coarse action' of $G$ on $X$. In this paper, we always assume that the
coarse space is coarsely connected. The notation on the
coarse geometry are based on Chapter 2 of 
the textbook written by John Roe
\cite{MR2007488}. 
\begin{definition}
An action of $G$ on $X$ is the {\itshape coarse action} if
 for each element $g$ of $G$, the map $\Psi_g \colon X \rightarrow X$
 defined by $x \mapsto g\cdot x$ is a coarse map. 
\end{definition}
\begin{definition}
 For a point $x_0$ of $X$, we define the orbit map $\Phi_{x_0}\colon G
 \rightarrow X$ by $g \mapsto g\cdot x_0$ and also we define:\\
\indent
The orbit of $x_0$ is {\itshape proper} 
if $\Phi_{x_0}$ is a proper map.\\
\indent
The orbit of $x_0$ is {\itshape bornologous}
 if $\Phi_{x_0}$ is a bornologous map.\\
\indent
The orbit of $x_0$ is {\itshape coarse} 
if $\Phi_{x_0}$ is a coarse map.
\end{definition}
The most typical example of the coarse action is mentioned in Lemma
\ref{lem:coarse_action}.
\begin{theorem}
\label{th:main_theorem}
 Assume that $G=\N^k \text{ or } \Z^k$ and $G$ acts on $X$ as a coarse action.
 Suppose that there exists a point $x_0$ of $X$ whose orbit coarse.
 Then the induced action of $G$ on the Higson corona $\nu X$ has
 a fixed point. Namely, there
 exists a point $x$ of $\nu X$ such that $g \cdot x = x$
 for any element $g \in G$.

 Moreover, Let $\overline{X}$ be a coarse compactification of $X$.
 We suppose that the action of $G$ on $X$ is continuous and extends to
 a continuous action on $\overline{X}$. Then the action on the boundary
 $\partial X = \overline{X}\setminus X$ has a fixed point.
 Namely, there exists a point $z$ of $\partial X$ such that 
 $g \cdot z = z$ for any element $g \in G$.
\end{theorem}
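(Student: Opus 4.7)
The plan is to leverage the functoriality of the Higson corona together with the abelianness of $G$. I recall two standard properties: a coarse map $f\colon X \to Y$ induces a continuous map $\nu f\colon \nu X \to \nu Y$, and two close coarse maps induce the same map on the corona. Now the coarse structure on $G = \N^k$ or $\Z^k$ is generated by the diagonals $\Delta_g = \{(h, hg) : h \in G\}$, which is exactly the statement that right translation $R_g$ is close to $\mathrm{id}_G$. Because $G$ is abelian, $L_g = R_g$, so every left translation is close to the identity, and therefore $\nu L_g = \mathrm{id}_{\nu G}$.

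The heart of the argument is the intertwining
\[
\Phi_{x_0}\circ L_g = \Psi_g\circ\Phi_{x_0},
\]
which follows directly from associativity of the action: $\Phi_{x_0}(gh) = (gh)\cdot x_0 = g\cdot(h\cdot x_0) = \Psi_g(\Phi_{x_0}(h))$. Since $\Phi_{x_0}$ is coarse by hypothesis and each $\Psi_g$ is coarse by the coarse-action assumption, applying the Higson corona functor and substituting $\nu L_g = \mathrm{id}$ yields $\nu\Phi_{x_0} = \nu\Psi_g\circ\nu\Phi_{x_0}$. Hence every point of the image $\nu\Phi_{x_0}(\nu G)\subset \nu X$ is fixed by every $\nu\Psi_g$, and since $\nu G\neq\emptyset$ for $k\geq 1$ (the case $k=0$ is vacuous), this yields the required fixed point $x\in\nu X$.

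For the second assertion, I would use that a coarse compactification $\overline X$ is by definition dominated by the Higson compactification via a continuous surjection $p\colon hX \to \overline X$ restricting to the identity on $X$ and sending $\nu X$ onto $\partial X$. The Higson extension of $\Psi_g$ and the hypothesized continuous extension to $\overline X$ both agree with $\Psi_g$ on the dense subset $X$, so by density $p$ is automatically $G$-equivariant, and $p(x)\in\partial X$ is the desired fixed point on the boundary.

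I expect no serious obstacle beyond bookkeeping; the only point requiring small care is the verification that left translation $L_g$ really is a coarse map on the semi-group $\N^k$ (bornologous is immediate, and $L_g^{-1}(B)=B-g$ is bounded whenever $B$ is), so that the Higson corona functor genuinely applies. Notably, no convex-fixed-point machinery such as Markov--Kakutani is needed: the abelianness of $G$ already trivializes the action on $\nu G$, and equivariance does the rest.
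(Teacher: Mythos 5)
Your proposal is correct and follows essentially the same route as the paper: you trivialize the induced action on $\nu G$ by observing that translations are close to the identity (the paper's Lemma \ref{lem:abelian_gr_action_on_itself}), push this through the corona functor via the equivariant orbit map $\Phi_{x_0}$, and then transfer the fixed point to $\partial X$ using the canonical surjection $hX \rightarrow \overline{X}$ and a density argument (the paper's Proposition \ref{prop:equivariant_map}). The only differences are cosmetic bookkeeping, such as your explicit check that $\nu G \neq \emptyset$.
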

\begin{example}
 Let $G$ be a finitely generated group with an element $\gamma$ of 
 infinite order. Then a group action of $\Z$ on $G$ by $(n,g) \mapsto
 \gamma^n g$ is a coarse action and the orbit of $e \in G$ is coarse. 
 Thus the action of $\Z$ on the Higson corona $\nu G$ has a fixed
 point. Moreover, if $G$
 is a hyperbolic group, this action extends to the Gromov boundary
 $\partial_g G$ (see Proposition \ref{prop:equivariant_map} and Example
 \ref{exmp:Hyperbolic_group}). Then this action of $\Z$ on $\partial_g G$
 has a fixed point. This is a famous fact on the group action on the
 boundary of hyperbolic groups. 
(c.f. Proposition 10 and Th\'{e}or\`{e}m 30 in Chapter 8 of \cite{MR1086648}.)
\end{example}
\begin{example}
 The wreath product $\Z \wr \Z$ contains $\Z^n$ as a subgroup for any
 integer $n$ (see page 135 of \cite{MR2007488}). Thus for any $n \in
 \N$, there is a coarse action of $\Z^n$ on $\Z \wr \Z$ and induced
 action of $\Z^n$ on $\nu (\Z \wr \Z)$ has a fixed point.
\end{example}
\begin{remark}
 Let $F_2$ be a free group of rank 2. By Lemma \ref{lem:coarse_action},
 the action of $F_2$ on $F_2$ is a coarse
 action and has a coarse orbit.
 However, the induced action of $F_2$ on the Higson corona
 $\nu F_2$ does not have any fixed point 
(see Proposition \ref{prop:F_2_action_on_the_Higson_corona}).
\end{remark}
Let $f \colon X \rightarrow X$ be a coarse map. We call a point $x$ of
$X$ as a {\itshape coarse fixed point} of $f$ if the orbit of $x$,
$\{f^n(x)|n\in \N\}$, is a bounded set.
\begin{corollary}[Coarse version of Brouwer's fixed point theorem]
\label{cor:Brouwer_theorem}
 Let $X$ be a proper metric space. Suppose that $f \colon X \rightarrow X$ is
 an isometry map and $\overline{X}$ is a  coarse compactification of $X$
 such that $f$ extends to a continuous map $\bar{f} \colon \bar{X}
 \rightarrow \bar{X}$. If $f$ does not have any coarse fixed points
 in $X$, then $\bar{f}$ has a fixed point in $\partial X$.
\end{corollary}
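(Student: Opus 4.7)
The plan is to apply Theorem \ref{th:main_theorem} with $G=\N$ acting on $X$ by iteration of $f$, $n\cdot x:=f^n(x)$. Since $f$ is an isometry it is in particular a coarse map, so this is a coarse action. Pick any base point $x_0\in X$; the aim is to verify that its orbit map $\Phi_{x_0}\colon\N\to X$, $n\mapsto f^n(x_0)$, is coarse, so that Theorem \ref{th:main_theorem} is applicable.

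Bornologousness of $\Phi_{x_0}$ is immediate from the isometry identity $d(f^n(x_0),f^m(x_0))=d(x_0,f^{|n-m|}(x_0))$, whose right-hand side is uniformly bounded for $|n-m|\le K$. Properness of $\Phi_{x_0}$ is equivalent to $d(x_0,f^n(x_0))\to\infty$ as $n\to\infty$.

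The main obstacle is extracting this asymptotic behaviour from the hypothesis. ``No coarse fixed point'' gives only $\sup_n d(x_0,f^n(x_0))=\infty$, which is a priori weaker than $d(x_0,f^n(x_0))\to\infty$. If properness failed, then along some $n_k\to\infty$ the sequence $\{f^{n_k}(x_0)\}$ would stay bounded, and by properness of $X$ we could extract a convergent subsequence; the isometry identity $d(x_0,f^{n_{k+1}-n_k}(x_0))=d(f^{n_k}(x_0),f^{n_{k+1}}(x_0))\to 0$ then produces $m_j\to\infty$ with $f^{m_j}(x_0)\to x_0$. Using this almost-periodicity together with subadditivity $d(x_0,f^{a+b}(x_0))\le d(x_0,f^a(x_0))+d(x_0,f^b(x_0))$, I would aim to bound the whole orbit of $x_0$ and so contradict the no-coarse-fixed-point hypothesis. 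The delicate point is controlling the interplay between $m_j$, its return error $\epsilon_j:=d(x_0,f^{m_j}(x_0))$, and the running maximum $\max_{r<m_j}d(x_0,f^r(x_0))$ in the natural estimate $d(x_0,f^n(x_0))\le (n/m_j)\,\epsilon_j+\max_{r<m_j}d(x_0,f^r(x_0))$; this is where I expect the main difficulty to lie.

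Once properness of $\Phi_{x_0}$ is in hand, Theorem \ref{th:main_theorem} applied to $G=\N$ produces a fixed point $\eta\in\nu X$ of the induced action on the Higson corona. By hypothesis $f$ extends to a continuous self-map of $\bar X$, so Proposition \ref{prop:equivariant_map} supplies an equivariant continuous surjection $\nu X\to\partial X$; the image of $\eta$ in $\partial X$ is the required fixed point of $\bar f$.
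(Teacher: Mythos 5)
Your reduction is the right one and matches the paper's: let $\N$ act by $n\cdot x=f^n(x)$, check the orbit map is bornologous (easy, and your argument agrees with Lemma \ref{lem:isometry-action_bornologous}) and proper, then invoke Theorem \ref{th:main_theorem}. But the properness step is exactly where the real content of the corollary lies --- it is the contrapositive of Proposition \ref{prop:coarse_fixed_point} --- and there your argument stops at a sketch that you yourself flag as incomplete. Moreover, the route you indicate does not close as stated: in the estimate $d(x_0,f^n(x_0))\le (n/m_j)\,\epsilon_j+\max_{r<m_j}d(x_0,f^r(x_0))$, for any fixed $j$ the first term is unbounded in $n$ (unless some $\epsilon_j=0$), while letting $j$ grow with $n$ reintroduces the running maximum $\max_{r<m_j}d(x_0,f^r(x_0))$, which is precisely the quantity you are trying to bound; recurrence of the orbit to the single point $x_0$ together with subadditivity is genuinely insufficient information, so this is a gap, not a technicality.

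The paper closes this step by a compactness/pigeonhole argument that uses return times from \emph{every} orbit point near the bounded set, not just from $x_0$. Concretely: pick a bounded $D\ni x_0$ hit by infinitely many $f^n(x_0)$, set $K=B(D,1)\cap\N\cdot x_0$, cover the compact set $\overline{K}$ by finitely many balls $B(x_i,1)$ with $x_i\in K$, and for each $i$ choose $T_i$ with $f^{T_i}(x_i)\in D$; put $L=\max_i\max_{0\le a\le T_i}d(x_0,f^a(x_i))$. Because $f$ is an isometry, if $f^{S}(x_0)\in B(x_i,1)$ then $f^{T_i+S}(x_0)\in B(D,1)$, so it again lies in $K$ and hence in some $B(x_{i'},1)$; inducting, the orbit returns to $K$ within time at most $\max_i T_i$, and between consecutive returns it stays in $B(x_0,L+1)$. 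Thus the whole orbit is bounded, contradicting the no-coarse-fixed-point hypothesis and yielding properness of $\Phi_{x_0}$. If you supply this (or an equivalent) uniform-return argument, the rest of your proposal --- bornologousness and the passage to a fixed point in $\partial X$ via the ``Moreover'' part of Theorem \ref{th:main_theorem} (equivalently Proposition \ref{prop:equivariant_map}) --- is correct and coincides with the paper's proof.
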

This corollary says that an isometry $f$ always has a coarse fixed point
in $X$ or a fixed point in $\partial X$.
\begin{example}
 Let $M$ be a compact path metric space. The {\itshape cone} $CM$ on $M$
 is the quotient space $M\times [0,\infty) / \sim$ where 
 $(x,t) \sim (x',t')$
 if and only if either $x=x'$ and $t=t'$ or $t=t'=0$.
 We define a continuous function 
$\tau\colon [0,\infty) \rightarrow [0,1)$ by
\[
 \tau(t) = \frac{t}{1+t} \;.
\]
 Set $\overline{CM} = M\times [0,1]/\sim $.
 The embedding $CM \rightarrow \overline{CM}$ :
 $(x,t) \mapsto (x,\tau(t))$ gives us a compactifications of $CM$.
 We can define an appropriate metric on $CM$ such that $CM$ is a proper
 metric space. Then $\overline{CM}$ becomes a coarse
 compactification (see Appendix).
 Let $f \colon CM \rightarrow CM$ be an isometry map and we suppose
 that $f$ extends to a continuous map 
$\bar{f} \colon \overline{CM}  \rightarrow \overline{CM}$.
 Then $f$ has a coarse fixed point on $X$ or $\bar{f}$ has
 a fixed point on the boundary $\partial (CM) \cong M$.
 If $M$ is ENR, then Lefschetz fixed point theorem implies that $\bar{f}$ has a
 fixed point since $\overline{CM}$ is contractible. However, Let 
 $g$ be an isometry map on a {\itshape punctured cone}
$CM^\times  = M \times [1/2,\infty)$. We suppose that 
$g$ extends to a continuous map $\bar{g}$
on a coarse compactification $\overline{CM^\times} = M \times [1/2,1]$.
Then $g$ also has a coarse fixed points on $CM^\times $ 
or $\bar{g}$ has a fixed point on the boundary $\partial (CM) \cong M$.
Since $CM^\times$ is homotopic to $M$, Lefschetz fixed point theorem
 does not imply this fact.
\end{example}
\begin{example}
 Set $M=S^{n-1}$. Then $CM$ is homeomorphic to the $n$-dimensional
 Euclidean space $\R^{n}$ and $\overline{CM}$ is homeomorphic to unit
 ball $B^n = \{x \in \R^n| |x| \leq 1\}$. Any isometry map $f\colon \R^n
 \rightarrow \R^n$ which can be extends to continuous map 
 $\bar{f}\colon B^n \rightarrow B^n$ has a coarse fixed point on $X$ or
 a fixed point on $\partial (\R^n) = S^{n-1}$. This corresponds to the
 Brouwer's fixed point theorem.
\end{example}

\subsubsection*{Acknowledgments}
We would like to thank the participants in the coarse geometry
seminar at Kyoto university, S.Honda, T.Kato, T.Kondo and M.Tsukamoto for
several discussions and useful comments.

\section{Coarse action}

\begin{lemma}
\label{lem:coarse_action}
 Let $G$ be a finitely generated group or $G=\N^k$ with left-invariant
 word metric for some generating set. The action of $G$ on $G$ by the
 left-translation $(g,h) \mapsto gh$ is a coarse action. Furthermore,
 for any point $h$ of $G$, the orbit of $h$ is coarse.
 any orbit of $h \in H$ has a coarse orbit.
\end{lemma}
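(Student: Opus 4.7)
My plan is to work entirely within the left-invariant word metric $d$ on $G$ and show that the left-translation action and the orbit maps are either isometric or near-isometric in a controlled way. First, I would observe that the coarse structure generated by the diagonals $\Delta_s$ (as $s$ ranges over a finite generating set) agrees with the metric coarse structure of $d$. Indeed, each $\Delta_s$ is controlled because $d(h, hs) = d(e, s)$ by left-invariance, and conversely any pair $(g_1, g_2)$ with $d(g_1, g_2) \leq R$ can be reached from $g_1$ by at most $R$ successive applications of generators, so it lies in a finite composition of the $\Delta_s$'s (and, in the group case, their inverses). Under this identification, \emph{controlled} and \emph{bounded} carry their usual metric meaning.

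For $\Psi_g(h) = gh$, left-invariance of $d$ immediately gives
\[
 d(\Psi_g(h_1), \Psi_g(h_2)) = d(gh_1, gh_2) = d(h_1, h_2),
\]
so $\Psi_g$ is an isometry, hence bornologous. For properness, $\Psi_g$ is injective (invertibility in the group case, cancellativity in the $\N^k$ case) and preserves diameters, so preimages of bounded sets are bounded.

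The orbit map $\Phi_h(g) = gh$ is right multiplication, which is \emph{not} an isometry in general. For properness, the triangle inequality yields $\bigl| d(e, gh) - d(e, h) \bigr| \leq d(e, g)$, so preimages of bounded sets remain bounded. For bornologous, I would use the conjugation estimate
\[
 d(g_1 h, g_2 h) \;=\; d(e, h^{-1} g_1^{-1} g_2 h) \;\leq\; 2\, d(e, h) + d(g_1, g_2),
\]
which holds in the group case; for $\N^k$ the action is abelian and $\Phi_h = \Psi_h$ so nothing new is needed. Thus $\Phi_h$ sends any controlled entourage to one whose diameter has grown by at most $2\, d(e, h)$, which is a constant depending only on $h$.

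I do not anticipate any serious obstacle. The only subtlety is that right multiplication fails to be an isometry in a non-abelian group, but the conjugation bound above absorbs this failure into an additive constant. Everything else is routine bookkeeping between the diagonal-generated coarse structure and the left-invariant word metric.
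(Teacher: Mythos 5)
Your argument is essentially the paper's: left-invariance makes each $\Psi_g$ an isometry, and the orbit map is large-scale Lipschitz via the same conjugation bound $d(g_1h,g_2h)=|h^{-1}g_1^{-1}g_2h|\le d(g_1,g_2)+2|h|$ (with $\Phi_h=\Psi_h$ disposing of the $\N^k$ case, where inverses are unavailable). The one step that does not work as written is properness of $\Phi_h$: the inequality you invoke, $\bigl|d(e,gh)-d(e,h)\bigr|\le d(e,g)$, only bounds $|g|$ from \emph{below} by $\bigl||gh|-|h|\bigr|$, which says nothing about preimages of bounded sets. What you need is the other pairing: by left-invariance $d(g,gh)=d(e,h)$, so $d(e,g)\le d(e,gh)+d(e,h)$, i.e.\ $|g|\le|gh|+|h|$, and hence $gh\in B(e,R)$ forces $g\in B(e,R+|h|)$. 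This is a one-line fix, not a missing idea. For comparison, the paper proves properness differently, using that bounded subsets of $G$ are finite and that $\Phi_h$ is injective; your metric estimate (once corrected) is just as good and slightly more robust, since it does not use local finiteness. Your preliminary reconciliation of the diagonal-generated coarse structure on $G$ with the word-metric coarse structure is a harmless extra that the paper leaves implicit.
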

\begin{proof}
 For given $g \in G$, the map $\Psi_g \colon G \rightarrow G$ by $h \mapsto
 gh$ is an isometry map, so it is a coarse map.
Let $h \in G$ be  given, we consider the orbit map $\Phi_h \colon G
 \rightarrow G$ by $g \mapsto gh$. We denote the word length by
 $|\cdot|$. For any $g, g' \in G$, we have
\[
d(\Phi_h(g),\Phi_h(g')) = d(gh, g'h) 
=|h^{-1}g^{-1}g'h| \leq |g^{-1}g'| + 2 |h|.
\]
This shows that $\Phi_h$ is a large scale Lipschitz map and hence a
 bornologous map. Let $D$ be a bounded subset of $G$, which is a finite
 set. Since $\Phi_g$ is injective,  
$\sharp \Phi_g^{-1}(D) \leq \sharp D < \infty$. 
 This shows that $\Phi_h$ is a proper map. 
It follows that $\Phi_h$ is a coarse map and the orbit of $h$ is coarse.
\end{proof}
The coarse action of $G$ on $X$ induces the continuous action on the
Higson corona of $X$. Here we recall the definition and basic properties
of the Higson corona. For more details, see Section 2.2 and 2.3 of
\cite{MR2007488}.

Let $X$ be a proper coarse space. Let $f \colon X \rightarrow \C$ be
a bounded continuous function on $X$. 
We denote by $\mathbf{d}f$ the function  
$\mathbf{d}f(x,y) = f(y) - f(x) \colon X \times X \rightarrow \C$. 
We define that $f$ is a {\itshape Higson function}
 if for any controlled set $E \subset X \times X$,
the restriction of $\mathbf{d}f$ to $E$ vanishes at infinity$^*$.
\footnote{$^*$That is,
for any $\epsilon > 0$, there exists a bounded subset $B \subset X$ such
that $|\mathbf{d}f(x,y)| < \epsilon $ for any 
$(x,y) \in E \setminus B \times B$.}
We denote $C_h(X)$ as the set of Higson functions on $X$, which becomes
unital $C^*$-algebra. The compactification $hX$ of X characterized by
$C(hX) = C_h(X)$ is called the {\itshape Higson compactification}.
 Its boundary $hX \setminus X$ is denoted $\nu X$ and is called the
 {\itshape Higson corona} of $X$. 
The following are basic properties of the Higson corona.
\begin{proposition}
 Let $f\colon X \rightarrow Y$ be a coarse map between proper coarse
 spaces. Then $f$ extends to a continuous map ${}_\nu f \colon \nu X
 \rightarrow \nu Y$. If $f,g \colon X \rightarrow Y$ are close, then
 ${}_\nu f = {}_\nu g$.
\end{proposition}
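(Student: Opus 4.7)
The plan is to exploit Gelfand duality, since the Higson compactification $hX$ is defined as the maximal ideal space of $C_h(X)$ and $\nu X$ is the spectrum of $C_h(X)/C_0(X)$.

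First, I would set $f^{\ast}\phi:=\phi\circ f$ and verify that this pullback sends $C_h(Y)$ into $C_h(X)$. Both components of the coarse map hypothesis enter here in a coordinated way. Bornologousness ensures that whenever $E\subset X\times X$ is controlled, $(f\times f)(E)$ is a controlled subset of $Y\times Y$. Given $\epsilon>0$ and a Higson $\phi$ on $Y$, the Higson condition for $\phi$ on $(f\times f)(E)$ furnishes a bounded $B'\subset Y$ outside of which $|\mathbf{d}\phi|<\epsilon$; properness of $f$ then makes $B:=f^{-1}(B')$ bounded in $X$, and since $\mathbf{d}(f^{\ast}\phi)(x,y)=\mathbf{d}\phi(f(x),f(y))$ this $B$ witnesses the Higson condition for $f^{\ast}\phi$ on $E$. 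Thus $f^{\ast}\colon C_h(Y)\to C_h(X)$ is a well-defined unital $*$-homomorphism.

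Second, Gelfand duality converts $f^{\ast}$ into a continuous map $hf\colon hX\to hY$, which on $X\subset hX$ coincides with $f$ by evaluation at points. To confirm that this extension restricts to a map ${}_\nu f\colon\nu X\to\nu Y$, I would use that properness of $f$ is precisely equivalent to $f^{\ast}(C_0(Y))\subset C_0(X)$. Hence $f^{\ast}$ descends to a $*$-homomorphism $C_h(Y)/C_0(Y)\to C_h(X)/C_0(X)$, whose dual is the desired ${}_\nu f$.

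Third, for the closeness statement, if $f$ and $g$ are close then $D:=\{(f(x),g(x)) : x\in X\}$ is a controlled subset of $Y\times Y$. For $\phi\in C_h(Y)$, the function $x\mapsto\phi(f(x))-\phi(g(x))=\mathbf{d}\phi(f(x),g(x))$ vanishes at infinity on $X$ (applying the Higson property of $\phi$ on $D$ and then using properness of $f,g$ to pull the relevant bounded set in $Y$ back to a bounded set in $X$). Consequently $f^{\ast}\phi-g^{\ast}\phi\in C_0(X)$ for every Higson $\phi$, so $f^{\ast}$ and $g^{\ast}$ induce the same homomorphism on $C_h(Y)/C_0(Y)\to C_h(X)/C_0(X)$, giving ${}_\nu f={}_\nu g$.

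The main obstacle I anticipate is the verification in the first step: the Higson condition has a quantifier alternation (``for every $\epsilon$ there is a bounded set'') and a universal quantifier over controlled sets, so it is the coordinated use of bornologousness (to move controlled sets forward) and properness (to pull bounded sets back) that has to be arranged cleanly. Once that is in hand, the remaining steps are formal consequences of Gelfand duality and the identification $\nu X=\mathrm{Spec}(C_h(X)/C_0(X))$.
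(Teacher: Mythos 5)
The paper does not actually prove this proposition (it is quoted from Roe's book, \S 2.2--2.3), so your proposal has to be measured against the standard argument there, and it is indeed the standard Gelfand-duality argument in outline. However, there is a genuine gap at your very first step. In this framework a coarse map is only proper and bornologous; it is \emph{not} assumed continuous (the paper is explicit about this distinction, e.g.\ Proposition \ref{prop:equivariant_map} has to add the hypothesis ``continuous coarse map'', and the maps $\Psi_g$ to which the present proposition is applied need not be continuous). Consequently $f^{*}\phi=\phi\circ f$ need not be continuous, so the claims $f^{*}(C_h(Y))\subset C_h(X)$ and $f^{*}(C_0(Y))\subset C_0(X)$ can simply fail: your verification only checks the vanishing-variation and vanishing-at-infinity conditions, not continuity, and continuity is not recoverable. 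Relatedly, there is in general no continuous map $hX\to hY$ extending a discontinuous coarse $f$ (the Higson \emph{compactification} is functorial only for continuous coarse maps), so the intermediate object $hf$ in your second step need not exist; nor can one dodge this by replacing $f$ with a close continuous coarse map --- e.g.\ the floor map $\R\to\Z$ is coarse but is close to no continuous coarse map.

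The missing idea (Roe's) is to enlarge the algebra: let $B_h(X)$ be the bounded, not necessarily continuous (Borel) functions $\phi$ with $\mathbf{d}\phi$ vanishing at infinity on each controlled set, and $B_0(X)$ the bounded functions vanishing at infinity. A partition-of-unity argument, using a uniformly bounded open cover of the proper coarse space $X$, shows every element of $B_h(X)$ differs from a continuous Higson function by an element of $B_0(X)$, whence $C_h(X)/C_0(X)\cong B_h(X)/B_0(X)$. Your computation with bornologousness and properness is exactly what shows $f^{*}(B_h(Y))\subset B_h(X)$ and $f^{*}(B_0(Y))\subset B_0(X)$, so $f^{*}$ descends to the quotients and its Gelfand dual is ${}_\nu f\colon\nu X\to\nu Y$; your third step (closeness forces $f^{*}\phi-g^{*}\phi\in B_0(X)$, hence equality on the quotient) then goes through verbatim. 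With that repair your argument becomes the standard proof; without it, the first two steps are not valid for the maps the proposition is actually about.
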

The Higson corona is a huge space (usually not second countable). So it is
useful to find more manageable compactifications. Thus we need to know
the relation between the Higson corona and such compactifications.
\begin{definition}
 Let $X$ be a proper coarse space and $\overline{X}$ be a compactification of
 $X$. We call $\overline{X}$ a {\itshape coarse compactification}
 if the following
 condition is satisfied: For any controlled set $E \subset X \times X$
 of $X$, its closure $\overline{E}$ in $ \overline{X} \times
 \overline{X}$ meets the 
 boundary $\partial (X\times X) = \overline{X} \times \overline{X} \setminus (X
 \times X)$ only at the diagonal, that is, 
\[
 \overline{E} \cap \partial (X \times X) \subset
 \Delta_{\partial X} = \{(\omega, \omega) | \omega \in \partial X\}.
\]
\end{definition}
\begin{proposition}
The Higson compactification  $hX$ is a coarse compactification. Furthermore,
 it is a universal coarse compactification, in the sense that, for any
 coarse compactification $\overline{X}$ of $X$, the identity map 
 $\mathrm{id}\colon X\rightarrow X$ extends
 uniquely to a continuous surjective map $\iota \colon hX \rightarrow
 \overline{X}$.
\end{proposition}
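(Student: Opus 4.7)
The plan is to prove the two assertions separately, using Gelfand duality to pass between the topology of $hX$ and the algebra $C_h(X) = C(hX)$.

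For the first assertion, that $hX$ is a coarse compactification, I would argue by contradiction. Suppose $E \subset X \times X$ is controlled and $(\omega_1, \omega_2) \in \overline{E} \cap \partial(X\times X)$ with $\omega_1 \neq \omega_2$. Pick a net $(x_\alpha, y_\alpha) \in E$ converging to $(\omega_1, \omega_2)$ in $hX \times hX$. Since $C(hX) = C_h(X)$ separates points of $hX$, there is a Higson function $f$ with $f(\omega_1) \neq f(\omega_2)$, and then $\mathbf{d}f(x_\alpha, y_\alpha) \to f(\omega_2) - f(\omega_1) \neq 0$. On the other hand, since $(\omega_1, \omega_2) \in \partial(X\times X)$, at least one of $\omega_1,\omega_2$ lies in $\nu X$, so the corresponding component of the net leaves every bounded subset of $X$ (bounded sets being precompact in the proper coarse space $X$, with closures in $hX$ disjoint from $\nu X$). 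Hence $(x_\alpha, y_\alpha) \notin B \times B$ eventually for any bounded $B$, and the Higson condition applied to $E$ forces $\mathbf{d}f(x_\alpha, y_\alpha) \to 0$, a contradiction.

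For the universality statement, I would run this argument in reverse: given a coarse compactification $\overline{X}$, I would show that every $f \in C(\overline{X})$ restricts to a Higson function on $X$, which by Gelfand duality produces the desired continuous map $\iota \colon hX \to \overline{X}$. If $f|_X$ failed to be Higson, there would be some controlled $E$, some $\epsilon > 0$, and a sequence $(x_n, y_n) \in E$ with $(x_n, y_n) \notin B \times B$ for every bounded $B$ and $|f(y_n) - f(x_n)| \geq \epsilon$ for all $n$. By compactness of $\overline{X}$, pass to a subsequence with $x_n \to \omega_1$ and $y_n \to \omega_2$ in $\overline{X}$; both limits lie in $\partial X$. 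The coarse compactification hypothesis on $\overline{X}$ forces $(\omega_1, \omega_2) \in \overline{E} \cap \partial(X \times X) \subset \Delta_{\partial X}$, so $\omega_1 = \omega_2$, and continuity of $f$ gives $f(y_n) - f(x_n) \to 0$, contradicting the choice of sequence. Thus restriction yields a unital $\ast$-homomorphism $C(\overline{X}) \to C_h(X)$, and Gelfand duality delivers $\iota$; it is surjective because $\iota(hX)$ is compact and contains the dense set $X$, and unique because $X$ is dense in $hX$ while $\overline{X}$ is Hausdorff.

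The main obstacle, such as it is, lies in careful bookkeeping around the interaction between boundedness in the coarse structure and the topology of the compactifications: specifically, that a net (or sequence) in $X$ converging to a boundary point in $hX$ or $\overline{X}$ must leave every bounded subset of $X$. This uses properness of the coarse structure in an essential way. Once that ingredient is in place, both parts reduce to the defining property of Higson functions together with the standard separation-of-points argument for characters of a unital commutative $C^*$-algebra.
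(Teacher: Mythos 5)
The paper gives no proof of this proposition at all: it is recalled as a standard property of the Higson compactification, with a pointer to Sections 2.2--2.3 of Roe's book, so there is no argument of the paper's to compare against; your proof is a correct reconstruction of the standard one (point separation by $C(hX)=C_h(X)$ versus the Higson condition for the first claim; the restriction homomorphism $C(\overline{X})\to C_h(X)$ plus Gelfand duality, density and Hausdorffness for universality, surjectivity and uniqueness). Two small bookkeeping points are worth fixing. First, in the universality step a general proper coarse space need not have a countable exhaustion by bounded sets, so ``sequence'' should be a net --- for instance indexed by the bounded sets ordered by inclusion, choosing for each bounded $B$ a pair in $E\setminus(B\times B)$ with $|\mathbf{d}f|\ge\epsilon$; note also that, as literally written, ``$(x_n,y_n)\notin B\times B$ for every bounded $B$'' cannot hold for a fixed $n$ (singletons are bounded in a coarsely connected space), and what you actually need is that the net eventually leaves $B\times B$ for each bounded $B$, which the inclusion-ordered net does. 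Second, the assertion that \emph{both} limits lie in $\partial X$ is not immediate from the escape property: what you get directly is that $(\omega_1,\omega_2)$ cannot lie in $X\times X$ (otherwise, bounded sets being exactly the relatively compact sets in a proper coarse space, the net would eventually be trapped in $B\times B$ for a suitable bounded $B$); but that is precisely enough, since then $(\omega_1,\omega_2)\in\overline{E}\cap\partial(X\times X)\subset\Delta_{\partial X}$ forces $\omega_1=\omega_2\in\partial X$, and continuity of $f$ gives the contradiction. With these adjustments your argument is complete and is essentially the textbook proof the paper is implicitly citing.
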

\begin{proposition}
\label{prop:equivariant_map}
 Let $f\colon X \rightarrow X$ be a continuous coarse map and $\overline{X}$
 be a coarse compactification. Suppose that $f$ extends to a continuous
 map $\bar{f} \colon \overline{X} \rightarrow \overline{X}$. Then
 $\bar{f}(\partial X) \subset \partial X $ and the following
 diagram commutes. 
\begin{eqnarray*}
 \xymatrix{
   hX \ar[d]_{\iota} \ar[rr]^{{}_hf=f \cup {}_\nu f} & &
   \ar[d]^{\iota} hX  \\
   \overline{X} \ar[rr]^{\bar{f}} &
    & \overline{X}
}
\end{eqnarray*}
\begin{proof}[Proof of Proposition \ref{prop:equivariant_map}]
 By the properness of $f$, we have 
$\bar{f}(\partial X) \subset \partial X $.
Since $f$ is a continuous coarse map, $f$ induces $f^* \colon C_h(X)
 \rightarrow C_h(X)$. Because $f$ extends to the continuous map on
 $\overline{X}$, $f^*$ maps $C(\overline{X})$ to itself. It is clear
 that $f^*$ and the inclusion $\iota ^* \colon C(\overline{X})
 \hookrightarrow C_h(X)$ commute.
\end{proof}
\begin{example}
\label{exmp:Hyperbolic_group}
 Let $X$ be a Gromov hyperbolic space. Then the Gromov
 compactification is a coarse compactification (Lemma 6.23 of
 \cite{MR2007488}). If $G$ is a hyperbolic 
 group and $\gamma \in G$ has an infinite order, then left-translation
 $\gamma \cdot \colon G \rightarrow G$ extends to a continuous map on
 the Gromov boundary $\partial_g G$ 
(see  Chapter 7 and 8 of \cite{MR1086648}).
\end{example}
\end{proposition}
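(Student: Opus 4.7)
The plan is to verify the two assertions of the proposition separately. The inclusion $\bar{f}(\partial X) \subset \partial X$ will come from the properness built into the definition of a coarse map, while the commutativity of the square will follow from a standard density argument exploiting the universality of $hX$ among coarse compactifications.

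For the first assertion, suppose toward a contradiction that some $\omega \in \partial X$ has $\bar{f}(\omega) = y \in X$. Since $X$ is open in $\overline{X}$ and locally compact, choose a compact neighborhood $K \subset X$ of $y$. Pick a net $(x_\alpha)$ in $X$ with $x_\alpha \to \omega$; by continuity of $\bar{f}$ we have $f(x_\alpha) \in K$ eventually. But $f$ is proper as a coarse map, so $f^{-1}(K)$ is bounded in $X$ and hence relatively compact in $\overline{X}$. This forces $(x_\alpha)$ to lie eventually inside a compact subset of $X$, contradicting $x_\alpha \to \omega \in \partial X$.

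For the commutativity, the previous proposition provides a continuous extension ${}_hf \colon hX \to hX$ of $f$, and composition yields two continuous maps $hX \to \overline{X}$, namely $\iota \circ {}_hf$ and $\bar{f} \circ \iota$. Each restricts on the dense subset $X \subset hX$ to $f$ composed with the inclusion $X \hookrightarrow \overline{X}$. Because $\overline{X}$ is Hausdorff, a continuous map out of $hX$ is determined by its restriction to $X$, so the two compositions agree. The main obstacle is really the first assertion --- once the boundary is known to be preserved, the diagram chase is formal. One could equivalently phrase the argument via Gelfand duality, observing that the pullback $f^*$ on $C_h(X)$ preserves the subalgebra $C(\overline{X})$ (because $f$ has the continuous extension $\bar{f}$) and commutes with the inclusion $\iota^* \colon C(\overline{X}) \hookrightarrow C_h(X)$, but the topological formulation makes the decisive role of properness most transparent.
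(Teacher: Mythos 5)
Your proof is correct, and it is essentially the paper's argument with the details filled in, plus a minor change of viewpoint on the second half. For the inclusion $\bar{f}(\partial X)\subset\partial X$ the paper simply says ``by the properness of $f$''; you supply the actual argument (compact neighborhood $K$ of a putative image point $y\in X$, properness giving that $f^{-1}(K)$ is bounded and hence has compact closure inside $X$, so a net converging to a boundary point cannot eventually lie in it), which is exactly the intended reasoning and is valid since a proper coarse space is locally compact and bounded sets are relatively compact. For the commutativity of the square, the paper works dually: $f^*$ preserves $C_h(X)$, preserves the subalgebra $C(\overline{X})$ because $\bar f$ exists, and commutes with the inclusion $\iota^*\colon C(\overline{X})\hookrightarrow C_h(X)$. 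You instead argue on the space side, noting that $\iota\circ{}_hf$ and $\bar f\circ\iota$ are continuous maps $hX\to\overline{X}$ agreeing with $f$ on the dense subset $X$, hence equal since $\overline{X}$ is Hausdorff; this is the spatial translation of the same fact (and you note the Gelfand-dual phrasing yourself). The density argument is arguably the more elementary route and makes clear that the commutativity needs nothing beyond the existence of the continuous extensions ${}_hf$, $\iota$, $\bar f$, while the paper's algebraic formulation sits closer to the way $hX$ and $\iota$ are actually constructed from $C_h(X)$.
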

\begin{lemma}
\label{lem:abelian_gr_action_on_itself}
 Let $G$ be $\N^k$ or $\Z^k$. A coarse action of
 $G$ on $G$ by $(g, n)\mapsto g+n$ extends to a trivial action on the
 Higson corona of $G$. Namely, for any $g\in G$, the induced map
${}_\nu\Psi_{g} \colon \nu G \rightarrow \nu G$ is equal to the identity
 map. 
\end{lemma}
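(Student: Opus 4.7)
The strategy is to show that for every $g \in G$, the translation map $\Psi_g \colon G \to G$ is \emph{close} to the identity map $\mathrm{id}_G$, and then invoke the proposition that close coarse maps induce the same continuous map on the Higson corona. Since ${}_\nu \mathrm{id} = \mathrm{id}_{\nu G}$, this will give ${}_\nu \Psi_g = \mathrm{id}_{\nu G}$ as desired.

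Concretely, I plan to verify that the set
\[
\{(h, \Psi_g(h)) \mid h \in G\} = \{(h, h+g) \mid h \in G\}
\]
is a controlled subset of $G \times G$. With the (left-invariant) word metric on $G = \N^k$ or $\Z^k$, commutativity yields
\[
d(h, h+g) = |{-h} + (h+g)| = |g|,
\]
which is a constant independent of $h$. Hence the above set is contained in the bounded-distance controlled set $\{(x,y) \mid d(x,y) \leq |g|\}$, so $\Psi_g$ and $\mathrm{id}_G$ are close.

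The main content of the lemma is really the observation that abelian-ness of $G$ is precisely what forces $d(h, h+g)$ to stay bounded uniformly in $h$; in a non-abelian group with left-invariant metric one would only get $d(h, gh) = |h^{-1}gh|$, which need not stay bounded. There is no genuine obstacle beyond this observation: once closeness is established, Proposition~(close coarse maps induce equal maps on $\nu X$) finishes the argument immediately, giving ${}_\nu \Psi_g = \mathrm{id}_{\nu G}$ for every $g \in G$ and hence the trivial induced action on $\nu G$.
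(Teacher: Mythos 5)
Your argument is correct and is essentially the paper's own proof: both compute $d(h, h+g) = |g|$ to conclude that $\Psi_g$ is close to $\mathrm{id}_G$, and then use the fact that close coarse maps induce the same map on the Higson corona. Nothing further is needed.
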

\begin{proof}
 Let $g \in G$ be given. For any $n\in G$, $d(n, \Psi_g(n)) = d(n,g+n) =
 |g|$. Then $\Psi_g$ and $\mathrm{id}_G \colon G \rightarrow G$ are
 close. It follows that ${}_\nu\Psi_{g} = \mathrm{id}_{\nu G}$.
\end{proof}
\begin{proof}[Proof of Theorem \ref{th:main_theorem}]
Since the action is coarsely free, there exists $x_0 \in X$ such that
 the $G$-equivariant map $\Phi_{x_0} \colon G \rightarrow X$ by 
$g \mapsto g \cdot x_0$ is a coarse map. Thus $\Phi_{x_0}$ extends to a
 continuous map ${}_\nu \Phi_{x_0}$ on $\nu G$ to $\nu X$.
For each $g \in G$, the coarse
 map $\Psi_{g} \colon X \rightarrow X$ extends to continuous maps 
${}_\nu \Psi_g \colon \nu X \rightarrow \nu X$ and $\overline{\Psi_g}
 \colon \partial X \rightarrow \partial X$. Then we have the following
 commutative diagram:
\begin{eqnarray*}
 \xymatrix{
   \nu G \ar[d]_{{}_\nu \Phi_{x_0}} \ar[rr]^{{}_\nu \Psi_{g} = \mathrm{id}} & &
   \ar[d]^{{}_\nu \Phi_{x_0}} \nu G \\
   \nu X\ar[rr]^{{}_\nu \Psi_{g}} &
    &\nu X\\ 
}
\end{eqnarray*}
By Lemma \ref{lem:abelian_gr_action_on_itself}, the upper right arrow is
the identity map.
We choose any $x' \in \nu G$ and denote 
$x = {}_\nu \Phi_{x_0}(x')$.
Then by the above commutative diagram, we have 
${}_\nu \Psi_{g}(x) = x$.

Moreover, Let $\overline{X}$ be a coarse compactification of $X$.
We suppose that the action of $G$ on $X$ is continuous and extends to
 a continuous action on $\overline{X}$. Then we have the following
 commutative diagram:
\begin{eqnarray*}
 \xymatrix{   \nu X  \ar[d]_{\iota} \ar[rr]^{{}_\nu \Psi_{g}} &
    &\ar[d]^{\iota} \nu X\\ 
   \partial {X}  \ar[rr]^{\overline{\Psi_{g}}} & &\partial X
}
\end{eqnarray*}
Set $z = \iota(x)$. We have $\overline \Psi_g(z) = z$.
\end{proof}
\begin{remark}
 Suppose that $X$ is a proper metric space and $\Phi_{x_0}$ is a continuous
 map. 
 By the results of Dranishnikov; Keesling; Uspenskij
 \cite{MR1607744}, if $\Phi_{x_0}$ is a coarse embedding and 
$\Phi_{x_0}(G)$ is a closed subset, we can show that the map  ${}_\nu
 \Phi_{x_0}$ in the proof of Theorem
 \ref{th:main_theorem} is an embedding. Indeed, Theorem 1.4 of
 \cite{MR1607744} states that the closure of $\Phi_{x_0}(G)$ in $hX$ is
 homeomorphic to the Higson corona $\nu(\Phi_{x_0}(G))$ of $\Phi_{x_0}(G)$,
 whose coarse structure is a bounded coarse
 structure defined by the induced metric from $X$.
 Since $\Phi_{x_0}$ is a coarse embedding, $\nu G$ is
 homeomorphic to $\nu (\Phi_{x_0}(G))$. Thus  ${}_\nu \Phi_{x_0} \colon \nu G
 \rightarrow \nu X$ is an embedding. It follows that there are so many
 fixed points on the Higson corona $\nu X$.
\end{remark} 
\begin{proposition}
\label{prop:F_2_action_on_the_Higson_corona}
 The action of $F_2$ on $\nu F_2$ induced by the left-translation $F_2
 \times F_2 \rightarrow F_2$ does not have any fixed point.
 Namely, there are no point $x$
 of $\nu F_2$ such that $g\cdot x = x$ for any element $g$ of
 $F_2$.
\end{proposition}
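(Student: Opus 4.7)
The plan is to reduce the question, which concerns the intractable Higson corona, to the much more manageable Gromov boundary, and then to invoke the tree structure of the Cayley graph to rule out global fixed points there. Since $F_2$ is a hyperbolic group, Example~\ref{exmp:Hyperbolic_group} tells us that its Gromov compactification $\overline{F_2} = F_2 \sqcup \partial_g F_2$ is a coarse compactification and that the left-translation action of $F_2$ on itself extends continuously to $\partial_g F_2$. Proposition~\ref{prop:equivariant_map} then supplies a continuous surjection $\iota \colon hF_2 \to \overline{F_2}$ which is $F_2$-equivariant and restricts to a surjection $\nu F_2 \to \partial_g F_2$. Consequently, any global fixed point $x \in \nu F_2$ of the induced $F_2$-action would descend to a global fixed point $z = \iota(x) \in \partial_g F_2$, so it suffices to show that no point of $\partial_g F_2$ is fixed by every element of $F_2$.

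For this last step I would argue directly from the tree geometry. The Cayley graph of $F_2$ with respect to a free generating set $\{a,b\}$ is the 4-regular tree, and each nontrivial element acts as a hyperbolic isometry with a unique axis, fixing exactly the two endpoints of that axis on $\partial_g F_2$. A suitable $z$ would in particular be fixed by both $a$ and $b$, so $z$ must belong to $\{a^{+\infty}, a^{-\infty}\} \cap \{b^{+\infty}, b^{-\infty}\}$. But the rays $(a^n)_{n\geq 0}$ and $(a^{-n})_{n\geq 0}$ representing $a^{\pm\infty}$ consist of reduced words beginning with $a^{\pm 1}$, whereas those representing $b^{\pm\infty}$ begin with $b^{\pm 1}$, so these four boundary points are distinct. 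Hence the intersection is empty, giving the desired contradiction.

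The only nonroutine ingredient is confirming that the Gromov compactification of $F_2$ really does slot into the paper's coarse-compactification framework in an $F_2$-equivariant way; this is precisely what Example~\ref{exmp:Hyperbolic_group} together with Proposition~\ref{prop:equivariant_map} guarantee, after which the argument is a short and classical observation about isometries of a tree. I therefore expect no serious obstacle beyond carefully writing down the equivariant reduction.
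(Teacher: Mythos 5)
Your proposal is correct and follows essentially the same route as the paper: reduce from $\nu F_2$ to the Gromov boundary via the coarse compactification and the equivariant surjection $\iota$, then observe that $F_2$ has no global fixed point on $\partial_g F_2$. The only difference is that you spell out the final tree-geometry step (each nontrivial element fixes exactly the two endpoints of its axis, and the fixed-point pairs of $a$ and $b$ are disjoint), which the paper merely asserts.
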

\begin{proof}
 Gromov compactification is a coarse compactification, and isometry
 action on $X$ induces the continuous action on the boundary.
 One can
 easily see that if the induced action of $F_2$ on $\nu F_2$ has a fixed
 point, the induced action of $F_2$ on the Gromov boundary $\partial_g
 F_2$ of $F_2$ also has a fixed point. However, we can show that for any
 point $z \in \partial_g F_2$, there exists an element $g$ of $F_2$
 such that $g\cdot z \neq z$.
\end{proof}
\section{Coarse fixed point}
\begin{definition}
 Let $G$ be a finitely generated semi-group acting on $X$. We call 
 a point $x$ of $X$ a {\itshape coarse fixed point} if its orbit 
$G \cdot x = \{g \cdot x| g \in G\} \subset X$ is a bounded set.
\end{definition}
\begin{remark}
 \label{rem:coarse_fixed_point}
If $G$ is an infinite group and $x$ is a coarse fixed point, then the
 orbit of $x$ is not proper.
\end{remark}
We are interested in the problem: When does the converse of Remark
\ref{rem:coarse_fixed_point} holds?
\begin{proposition}
\label{prop:uniformly_discreate}
 Let $X$ be a coarse space such that any bounded subset $D
 \subset X$ is a finite set. Suppose that an abelian semi-group $\N$
 acts on $X$. Then a point $x_0$ of $X$ whose orbit is not proper is a
 coarse fixed point.
\end{proposition}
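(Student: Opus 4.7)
The plan is to show that $\N\cdot x_0$ is in fact a finite subset of $X$; finite sets are automatically bounded in any coarse space (being finite unions of singletons), so this will give the conclusion. First I would translate the hypothesis ``orbit not proper'' into combinatorics: there is a bounded subset $D\subset X$ whose preimage under $\Phi_{x_0}\colon n\mapsto n\cdot x_0$ is unbounded in $\N$, hence infinite. By the standing hypothesis $D$ itself is finite, so pigeonhole furnishes a point $y\in D$ and integers $n_1<n_2$ with $n_1\cdot x_0 = n_2\cdot x_0 = y$.

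Next I would exploit the $\N$-action axiom $(a+b)\cdot x = a\cdot(b\cdot x)$. Setting $k = n_2 - n_1 > 0$, the equality $(n_1+k)\cdot x_0 = k\cdot(n_1\cdot x_0)$ gives $y = k\cdot y$, so $y$ is fixed by $\Psi_k$. Iterating, $(m+k)\cdot y = m\cdot(k\cdot y) = m\cdot y$ for every $m\in\N$, which forces $\N\cdot y = \{y,\,1\cdot y,\dots,(k-1)\cdot y\}$ to have at most $k$ elements. For every $n\ge n_1$ we can rewrite $n\cdot x_0 = (n-n_1)\cdot y$, which belongs to this finite orbit of $y$, while $\{n\cdot x_0 : 0\le n<n_1\}$ is trivially finite. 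Thus $\N\cdot x_0$ is a finite, hence bounded, subset of $X$, so $x_0$ is a coarse fixed point.

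There is essentially no obstacle here: the argument is purely combinatorial, and the only step worth double-checking is that ``unbounded in $\N$'' implies ``infinite,'' which holds because the bounded subsets of $\N$ with its standard word-metric coarse structure are exactly the finite ones. Apart from that one line, the proof only uses the definition of an $\N$-action and the pigeonhole principle.
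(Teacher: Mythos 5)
Your proposal is correct and follows essentially the same route as the paper: pigeonhole on the finite bounded set $D$ produces a repetition $n_1\cdot x_0 = n_2\cdot x_0$, and the semigroup action axiom then makes the orbit eventually periodic, hence finite and bounded. The only nuance is your parenthetical claim that finite sets are bounded ``in any coarse space'': in general a union of bounded sets need not be bounded, and this step really uses the paper's standing assumption that $X$ is coarsely connected, under which it is indeed harmless.
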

\begin{proof}
 Since the orbit of $x_0$ is not proper, there exists a bounded set $D
 \subset X$ such that $\sharp \{n \in \N| n \cdot x_0 \in D\} =
 \infty$. Because $\sharp D < \infty$, there exists $m, n \in \N, m > n$
 such that $m\cdot x_0 = n\cdot x_0$. Hence for any $l > m$, there are some
 $k > 0$ and $r = 0, \cdots, m-n-1$ satisfying $l - n = k(m-n) + r$ and thus
 $l\cdot x_0 = r \cdot(n\cdot x_0) $. It follows that $\N \cdot x_0
 \subset \{x_0, 1\cdot x_0, 2\cdot x_0, \cdots, (m-1)\cdot x_0\}$.
\end{proof}
\begin{proposition}
\label{prop:coarse_fixed_point}
 Let $X$ be a proper metric space. Suppose that an abelian
 semi-group $\N$ acts on $X$ as an isometry action.
 Then a point  $x_0$ of $X$ whose
 orbit is not proper is a coarse fixed point. 
\end{proposition}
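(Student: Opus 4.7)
The strategy is to emulate the proof of Proposition \ref{prop:uniformly_discreate}, using the compactness of closed balls in the proper metric space $X$ in place of the finiteness of bounded subsets in the discrete setting.

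First, since the orbit of $x_0$ is not proper, there exist $R>0$ and a strictly increasing sequence $n_1<n_2<\cdots$ in $\N$ with $n_k\cdot x_0\in \overline{B}(x_0,R)$. Because $X$ is proper, this ball is compact, so after passing to a subsequence we may assume $n_k\cdot x_0$ converges to some $y\in X$. The convergent sequence is Cauchy, and since each $\Psi_{n_k}$ is an isometry commuting with every $\Psi_m$, one has
\[
d(n_k\cdot x_0,\, n_{k+1}\cdot x_0)=d\bigl(\Psi_{n_k}(x_0),\Psi_{n_k}(\Psi_{n_{k+1}-n_k}(x_0))\bigr)=d(x_0,(n_{k+1}-n_k)\cdot x_0).
\]
Writing $m_k:=n_{k+1}-n_k$ and $r(n):=d(x_0,n\cdot x_0)$, we obtain $r(m_k)\to 0$. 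If some $m_k$ satisfies $r(m_k)=0$, then $m_k\cdot x_0=x_0$ and the discrete argument from Proposition \ref{prop:uniformly_discreate} applies verbatim to give the finite orbit $\{0,1,\dots,m_k-1\}\cdot x_0$, which is bounded. Otherwise, after a further passage to a subsequence we may assume $m_k\to\infty$ and $r(m_k)\downarrow 0$.

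Next, I would exploit these arbitrarily long ``almost periods'' to bound the orbit. For any $n\in\N$, pick $k$ so large that $m_k>n$; by commutativity of the $\N$-action and the isometry of $\Psi_n$,
\[
d(m_k\cdot x_0,\, n\cdot x_0)=d\bigl(\Psi_n(\Psi_{m_k-n}(x_0)),\Psi_n(x_0)\bigr)=r(m_k-n),
\]
hence $r(n)\leq r(m_k)+r(m_k-n)$. A parallel computation gives the key \emph{uniform} recurrence on the orbit closure $\overline{O}:=\overline{\{n\cdot x_0:n\in\N\}}$: for every $z\in \overline{O}$,
\[
d(\Psi_{m_k}(z),z)=r(m_k),
\]
so $\Psi_{m_k}|_{\overline{O}}$ converges uniformly to the identity. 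The plan is then to convert this uniform recurrence, together with properness of $X$, into the compactness of $\overline{O}$—equivalently into the boundedness of the orbit—by showing $\overline{O}$ is totally bounded.

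The main obstacle is this final step: the direct decomposition $n=qm_k+s$ with $s<m_k$ only yields the bound $r(n)\leq M_{m_k}+q\,r(m_k)$, where $M_{m_k}=\max_{0\leq s<m_k}r(s)$, which grows linearly in $n$ and is therefore insufficient. To close the argument I expect that one must use several almost periods $m_{k_1}<m_{k_2}<\cdots$ simultaneously (choosing them so that their subset-sums cover $\N$ while the corresponding $r$-values have a summable total), or invoke the structure of the closed abelian semigroup $\overline{\{\Psi_n:n\in\N\}}$ inside the isometric embeddings of $X$ to deduce directly that the orbit closure is compact.
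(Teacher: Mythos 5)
Your reduction to recurrence is correct: passing to a convergent subsequence of returns and using that the $\N$-action is by commuting isometries does give $m_k\to\infty$ with $r(m_k)=d(x_0,m_k\cdot x_0)\to 0$, and the exactly periodic case is correctly disposed of via Proposition \ref{prop:uniformly_discreate}. But the step you flag as ``the main obstacle'' is not a technicality: it is the entire content of the proposition, and neither of your proposed repairs closes it. The purely arithmetic data you have retained --- subadditivity $r(a+b)\le r(a)+r(b)$ together with $r(m_k)\to 0$ along a (possibly very sparse) sequence --- does \emph{not} imply that $r$ is bounded. For instance, let $q_k$ grow rapidly, give weight $1$ to the generator $1$ and weight $1/k$ to the generator $q_k$, and let $r(n)$ be the minimal total weight of a representation of $n$ as a sum of these generators; this $r$ is subadditive, $r(q_k)\le 1/k\to 0$, yet $r$ is unbounded once $q_{k+1}/(k\,q_k)\to\infty$, because an integer near $q_{k+1}/2$ needs about $q_{k+1}/(2q_k)$ summands, each of weight at least $1/k$. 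This is exactly the coefficient blow-up you noticed, and it also defeats your first repair: sums of several sparse almost periods reach a general $n$ only with unboundedly many summands, so a summable choice of $r$-values does not help. Your second repair is circular: compactness of the closure of $\{\Psi_n : n\in\N\}$ (pointwise or uniform-on-compacta topology) already forces the orbit of $x_0$ to be relatively compact, i.e.\ bounded, which is what is to be proved; and the uniform convergence $\Psi_{m_k}\to\mathrm{id}$ on the orbit closure is, by isometry and commutativity, no more information than $r(m_k)\to 0$ itself.

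What must be used again at this point --- and what the paper's proof does use --- is properness, in the form of a finite-cover (pigeonhole) argument producing \emph{uniform} return times. The paper fixes the bounded set $D$ of returns with $x_0\in D$, notes that every orbit point $x$ has some $n(x)$ with $n(x)\cdot x\in D$, covers the compact set $\overline{K}$, where $K=B(D,1)\cap\N\cdot x_0$, by finitely many balls $B(x_i,1)$ with $x_i\in K$, and sets $T_i=n(x_i)$ and $L=\max_i\max_{0\le a\le T_i}d(x_0,a\cdot x_i)$. Because the action is isometric, whenever $S\cdot x_0\in B(x_i,1)$ one gets $(S+T_i)\cdot x_0\in B(D,1)$, hence back in $K$, while all intermediate points $(S+a)\cdot x_0$, $0\le a<T_i$, stay in $B(x_0,L+1)$; an induction then traps the whole orbit in $B(x_0,L+1)$. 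The finitely many return times $T_i$ and the single constant $L$ supply precisely the uniformity that your almost-period sequence lacks, so completing your argument requires reintroducing this compactness step (or an equivalent), not further arithmetic on the $m_k$.
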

\begin{proof}
  Since the orbit of $x_0$ is not proper, there exists a bounded set $D
 \subset X$ such that $\sharp \{n\in \N| n \cdot x_0 \in D\} =
 \infty$. We can assume that $x_0$ lies in $D$. We notice that for any 
 point $x$ of the orbit $\N\cdot x_0$, 
 there exist $n(x) \in \N$ such that $n(x) \cdot x$ lies in
 $D$. We define a bounded subset $K \subset X$ by
\[
 K = B(D,1) \cap \N \cdot x_0.
\] 
Here $B(D,1) = \{x \in X| {}^\exists y \in D, d(x,y) < 1 \}$ is the
 1-neighborhood of $D$.
Since $\overline{K}$ is a compact set, there exists $x_1,\dots, x_N \in K$
 such that
\[
  \overline{K} \subset \bigcup_{i = 1}^N B(x_i,1).
\] 
For $0\leq i \leq N$, we denote $T_i = n(x_i)$. The point $T_i \cdot x_i$ lies
 in $D$.
We define a positive number $L$ by
\[
 L = \max_{0\leq i \leq N}\max_{0 \leq a \leq T_i}d(x_0, a\cdot x_i).
\]
We define inductively a sequence $\{i_k\}_{k=0}^\infty$ 
consisting of integers in $ \{1,\dots,N\}$ 
and an increasing sequence $\{S_j\}_{j=0}^{\infty}$ as
 follows:
Let $S_0 = T_{0}$. Since $S_0\cdot x_0 = T_{0}\cdot x_0 \in
 K$, there exists an integer $i_0 \in \{1,\dots,N\}$ such that $S_0\cdot x_0 \in B(x_{i_0},1)$.
 Assume that we have defined $i_0,\dots, i_n$ and $S_0,\dots, S_n$ such that, 
for $0\leq j \leq n$ and $0\leq a < T_{i_j}$, they satisfy
\begin{eqnarray*}
 S_j &=& \sum_{k=0}^{j-1}T_{i_k}, \\
 S_j \cdot x_0 &\in& B(x_{i_j}, 1) ,\\
 (a + S_j)\cdot x_0 &\in& B(x_0, L + 1).
\end{eqnarray*}
Let $S_{n+1} = T_{i_n} + S_{n}$. Since $T_{i_{n}}\cdot x_{i_{n}} \in D$ and 
\[
 d(S_{n+1}\cdot x_0,\, T_{i_n}\cdot x_{i_n}) = d(S_n \cdot x_0,\, x_{i_n}) < 1,
\]
we have $S_{n+1} \cdot x_0 \in B(D,1)$
 and thus $S_{n+1} \cdot x_0$ lies in $K$. Hence there exists
 $i_{n+1}$ such that 
 $S_{n+1} \cdot x_0$ lies in $B(x_{i_{n+1}},1)$. For $0 \leq a <
 T_{n+1}$, we have $d(x_0, a\cdot x_{i_{n+1}}) \leq L$ and 
\[
 d((a + S_{n+1})\cdot x_0,\, a \cdot x_{i_{n+1}}) 
 = d(S_{n+1} \cdot x_0,\, x_{i_{n+1}}) < 1.
\]
It follows that 
$(a + S_{n+1})\cdot x_0$ lies in $B(x_0, L + 1)$.
This shows that for any integer $l > 0$, $l\cdot x_0$ lies in $B(x_0, L+1)$
 and thus $x_0$ is a coarse fixed point.
\end{proof}
\begin{remark}
Under the assumption of Proposition \ref{prop:coarse_fixed_point}, if
 $x_0$ is a coarse fixed point, then any point $x$ of $X$ is a coarse fixed
 point.
\end{remark}
If an isometry action does not have any coarse fixed point, the orbits
of any points are proper. On the other hand, the orbits by an isometry
action are always bornologous.
\begin{lemma}
\label{lem:isometry-action_bornologous}
 Let $X$ be a proper metric with an isometry action of $\N$.
 Then the action is a coarse action and any orbits are bornologous.
\end{lemma}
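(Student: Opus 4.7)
The plan is to verify the two assertions separately, in both cases leveraging the fact that each map $\Psi_n \colon X \to X$, $x \mapsto n \cdot x$, is an isometry.

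For the first assertion (coarseness of the action), fix $n \in \N$. The map $\Psi_n$ is automatically bornologous since it preserves distances. For properness, if $D \subset X$ is bounded of diameter $R$, then any two points $x,y \in \Psi_n^{-1}(D)$ satisfy $d(x,y) = d(n \cdot x, n \cdot y) \le R$, so $\Psi_n^{-1}(D)$ has diameter at most $R$ and is bounded. Hence $\Psi_n$ is a coarse map and the action is a coarse action.

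For the second assertion (bornologous orbits), fix $x_0 \in X$ and consider $\Phi_{x_0} \colon \N \to X$. The controlled sets of $\N$ are contained in $\{(m,n) : |m - n| \le R\}$ for some $R$, so bornologousness reduces to uniformly bounding $d(m \cdot x_0, n \cdot x_0)$ in terms of $|m - n|$. The key step is: for $m = n + k$ with $k \ge 0$, the $\N$-action axiom together with commutativity gives $m \cdot x_0 = \Psi_n(k \cdot x_0)$, so the isometry of $\Psi_n$ yields $d(m \cdot x_0, n \cdot x_0) = d(k \cdot x_0, x_0)$. Iterating the triangle inequality and using that $d((i+1) \cdot x_0, i \cdot x_0) = d(1 \cdot x_0, x_0)$ for every $i$ (again by isometry of $\Psi_i$) gives $d(k \cdot x_0, x_0) \le k \cdot d(1 \cdot x_0, x_0)$. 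Hence $\Phi_{x_0}$ is Lipschitz, in particular bornologous.

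No serious obstacle arises; the argument is essentially a matter of unwinding definitions. The one point to watch is that since $\N$ has no inverses, one must reduce $m \cdot x_0$ to $\Psi_n(k \cdot x_0)$ using commutativity of the semigroup, rather than by translating by $-n$.
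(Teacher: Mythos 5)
Your proof is correct and follows essentially the same route as the paper: both establish the bound $d(m\cdot x_0,\,n\cdot x_0)\le |m-n|\,d(1\cdot x_0,\,x_0)$ by chaining the unit steps $d((i+1)\cdot x_0,\,i\cdot x_0)=d(1\cdot x_0,\,x_0)$ (your reduction via $\Psi_n$ is only a cosmetic reorganization of the paper's telescoping triangle inequality), so $\Phi_{x_0}$ is Lipschitz, hence bornologous. The only difference is that you spell out why an isometry action is a coarse action (bornologous plus proper via the diameter bound on $\Psi_n^{-1}(D)$), which the paper asserts in a single sentence.
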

\begin{proof}
 An isometry action is a coarse action. For any given point $x$ of $X$,
 let $L = d(1\cdot x, x)$ be the distance between $1\cdot x$ and $x$. 
Then we have $d((i+1)\cdot x, i\cdot x) = L$ for all integers $i > 0$.
 Hence for any integers $m \geq n > 0$,
\[
  d(\Phi_x(m), \Phi_x(n)) = d(m\cdot x, n\cdot x) \leq
 \sum_{i=n}^{m-1}d((i+1)\cdot x, i\cdot x) = L|m-n|.
\]
Thus $\Phi_x$ is a bornologous map.
\end{proof}
\begin{proof}[Proof of Corollary \ref{cor:Brouwer_theorem}]
 A coarse action of $\N$ on $X$ is defined by $(n,x) \mapsto f^n (x)$.
 This is an isometry action. 
 If $f$ does not have any coarse fixed point,
 then by Lemma \ref{prop:coarse_fixed_point} and Lemma
 \ref{lem:isometry-action_bornologous}, any orbit of this action is
 coarse. Thus Theorem \ref{th:main_theorem} implies that $\bar{f}$ has 
 a fixed point. 
\end{proof}

\section{More example of coarse action} 
\subsection*{Adding machine}
Let $T_2 = (V_2, E_2)$ be a binary tree with the set of vertices $V_2$ and
the set edges $E_2$. We identify $V_2$ with the following sets.
\[
 V_2 \cong  \{*\} \sqcup \bigsqcup_{n\geq 1} \{0,1\}^n.
\]
Here $*$ denotes the base point.
\begin{center}
 \pstree[nodesep=3pt, levelsep=50pt]{\TR{$*$} }{
        \pstree{ \TR{0} }{
                     \pstree{ \TR{00}}{
                                   \TR{000}
                                   \TR{100}
                                             }
                     \pstree{ \TR{10}}{
                                   \TR{010}
                                   \TR{110}
                                             }
                                                }
        \pstree{ \TR{1} }{

                     \pstree{ \TR{01}}{
		                   \TR{001}
				   \TR{101}
				             }
                     \pstree{ \TR{11}}{
		                   \TR{011}
				   \TR{111}
				             }
			     		        }
                                                   }
\end{center}
$*$ and $0$ (resp. $1$) can be joined by the edge $e_{*,0}$
(resp. $e_{*,1}$).
Let $x=(i_{n-1},\dots,i_0)$ and $y=(j_{n},\dots,j_0)$ be two
vertices. $x$ and $y$ can be joined by the edge $e_{x,y}$ if and only if
$i_0 = j_0, \dots, i_{n-1} = j_{n-1}$. $T_2$ has the usual metric and is
a geodesic space. $V_2$ has the induced metric from $T_2$.

We construct a coarse action of $\N$ on $V_2$. Let $x$ be a vertex.
If $x = (i_n,\dots, i_0) \neq (1,\dots,1)$, we define $1\cdot x =
(j_{n-1}, \dots, j_0)$ with 
\[
 \sum_{k=0}^{n-1} j_k 2^k =  \sum_{k=0}^{n-1} i_k 2^k + 1.
\]
If $x = (\overbrace{1,\dots,1}^{n})$, we define
 $1\cdot x = (\overbrace{1,0,\dots,0}^{n+1})$. Finally we define 
$1\cdot * = (0)$.

The Gromov product of $x=(i_{n-1},\dots,i_0)$ and
$y=(j_{m-1},\dots,j_0)$ with a base point $*$ is defined to be
\[
 (x|y) = \frac{1}{2}\{d(x,*) + d(y,*) - d(x,y)\}.
\]
We notice that  $(x|y) = r$ if and only if $i_0 = j_0, \dots, i_{r-1} = j_{r-1}$
and $i_r \neq j_r$.
\begin{lemma}
 $1\cdot \colon V_2 \rightarrow V_2$ is a coarse map.
\end{lemma}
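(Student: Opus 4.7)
The plan is to verify the two ingredients of coarseness separately: that $1\cdot$ is bornologous (in fact large scale Lipschitz) and that it is proper. The bornologous estimate rests on a single observation about binary addition: adding $1$ propagates information only from low-order digits to higher ones, so if $x = (i_{n-1},\dots,i_0)$ and $y = (j_{m-1},\dots,j_0)$ agree in their first $r$ low-order digits, the carry generated by adding $1$ behaves identically in those positions of $x$ and of $y$.

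Concretely, the condition $(x|y) = r$ means $i_0 = j_0, \dots, i_{r-1} = j_{r-1}$, and I would split into two cases. If some $i_s = 0$ with $s < r$, then the carry stops at position $s$: the digits at positions $0,\dots,s-1$ flip from $1$ to $0$ and the digit at position $s$ flips from $0$ to $1$, identically in $x$ and $y$, while positions $s+1, \dots, r-1$ are untouched. If instead $i_0 = \cdots = i_{r-1} = 1$, then the digits at positions $0, \dots, r-1$ all become $0$ in both $1\cdot x$ and $1\cdot y$. In either case the outputs still agree in their first $r$ low-order digits, giving
\[
(1\cdot x \mid 1\cdot y) \;\geq\; r \;=\; (x|y).
\]
Combined with the trivial bound $d(1\cdot x, *) \leq d(x,*) + 1$ (with equality only when $x$ is an all-ones string) and the identity $d(u,v) = d(u,*) + d(v,*) - 2(u|v)$, this yields $d(1\cdot x, 1\cdot y) \leq d(x,y) + 2$, so $1\cdot$ is a large scale Lipschitz map, hence bornologous.

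For properness, I would observe that $1\cdot$ is a bijection from $V_2$ onto $V_2 \setminus \{*\}$ (its inverse is the evident ``subtract $1$'' map, noting $1\cdot * = (0)$), so in particular it is injective. Since $T_2$ is a locally finite geodesic graph, every bounded subset of $V_2$ is finite, and hence the preimage under an injection of a bounded set is finite, hence bounded. Thus $1\cdot$ is proper. The only step that is not a brief verification is the carry-propagation case analysis underpinning the Gromov-product inequality; I expect that to be the main, albeit mild, obstacle.
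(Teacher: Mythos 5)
Your argument is essentially the paper's: the paper also deduces the large scale Lipschitz bound $d(1\cdot x,1\cdot y)\leq d(x,y)+2$ from the inequality $(1\cdot x\,|\,1\cdot y)\geq (x|y)$ (which it dismisses as ``easy to see''---your carry-propagation analysis is precisely the omitted verification, and it is correct), and it also derives properness from injectivity together with the finiteness of bounded subsets of $V_2$. One correction, however: $1\cdot$ is \emph{not} injective, and in particular not a bijection onto $V_2\setminus\{*\}$. Strings of different lengths are distinct vertices, and the all-ones string of length $n$ and the length-$(n+1)$ string $(0,1,\dots,1)$ have the same image $(1,0,\dots,0)$; for instance $1\cdot(1)=1\cdot(0,1)=(1,0)$. (The paper's own proof makes the same erroneous injectivity claim.) Your claimed inverse ``subtract $1$'' is therefore not well defined, and the map is not even surjective onto $V_2\setminus\{*\}$, since for example $(0,0)$ has no preimage. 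The conclusion nevertheless survives with a trivial repair: a vertex $z$ has at most one preimage of its own length (the string of value $\mathrm{val}(z)-1$) and at most one shorter preimage (the all-ones string, and only when $z=(1,0,\dots,0)$), plus possibly $*$ when $z=(0)$; so $1\cdot$ is at most two-to-one, preimages of bounded (hence finite) sets are finite, and properness follows as before.
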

\begin{proof}
 Let $x=(i_{n-1},\dots, i_{0})$ and $y=(j_{m-1},\dots,j_{0})$ be two
 vertices. Set $(x|y) = r$. Then $i_0=j_0,\dots,i_{r-1}=j_{r-1}$. 
 It is easy to see that $(1\cdot x | 1\cdot y) \geq r$.
 Hence we have
\begin{eqnarray*}
 d(1\cdot x, 1\cdot y) &=& 
d(1\cdot x, *) + d(1\cdot y,*) -2 (1\cdot x | 1\cdot y)\\
&\leq& (n+1) + (m+1) -2r\\
&=& d(x,y) + 2.
\end{eqnarray*}
 Thus $x \mapsto 1\cdot x$ is a Large scale Lipschitz map.
 Since the map $1\cdot$ is injective and every bounded set of $V_2$
 is a finite set, we have the map is proper.
\end{proof} 
This action can be extended to a continuous action on the Gromov boundary
$\partial_g V_2$ of $V_2$. Here we can identify $\partial_g V_2$ as the
Cantor set $\{0,1\}^\N$ with a metric $d$ defined to be
\[
 d(x,y) = 2^{-(x|y)}.
\]
Here $x=(i_k)_{k=0}^\infty$, $y=(j_k)_{k=0}^\infty$. $(x|y) = r$ if
and only if $i_0 = j_0, \cdots , i_{r-1} = j_{r-1}$ and $i_r \neq j_r$.
\begin{proposition}
 The action of $\N$ on $\partial_g V_2$ is minimal. Indeed, for any
 point $x$ of $\partial_g V_2$, its orbit $\N \cdot x$ is dense
 in $\partial_g V_2$.
\end{proposition}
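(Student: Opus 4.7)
The plan is to identify $\partial_g V_2 = \{0,1\}^{\mathbb{N}}$ with the ring $\mathbb{Z}_2$ of $2$-adic integers and recognize the induced action of $\mathbb{N}$ as ordinary $2$-adic translation. Concretely, the boundary point $x = (i_k)_{k=0}^\infty$ corresponds to the series $\sum_{k\geq 0} i_k 2^k \in \mathbb{Z}_2$, and the metric $d(x,y) = 2^{-(x|y)}$ is exactly the $2$-adic metric, since $(x|y) = r$ means $i_k = j_k$ for $k < r$ but $i_r \neq j_r$, i.e.\ $x \equiv y \pmod{2^r}$ but $x \not\equiv y \pmod{2^{r+1}}$.

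First I would verify that, under this identification, the continuous extension of $1 \cdot \colon V_2 \to V_2$ to $\partial_g V_2$ agrees with the map $x \mapsto x+1$ in $\mathbb{Z}_2$. The definition of the adding machine on $V_2$ is precisely the carry algorithm, and the only two things to check are that (i) for a sequence $(i_0, i_1, \ldots)$ with infinitely many zeros the carries terminate and produce the $2$-adic successor, and (ii) for the sequence $(1,1,1,\ldots)$, which equals $-1$ in $\mathbb{Z}_2$, the continuous extension gives $(0,0,0,\ldots) = 0$. Both amount to continuity of $+1$ in the $2$-adic topology, which follows from the approximation $1 \cdot (\overbrace{1,\ldots,1}^{n}, 0, i_{n+1}, \ldots) = (0,\ldots,0,1, i_{n+1},\ldots)$. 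Inductively, $n \cdot x = x + n$ in $\mathbb{Z}_2$ for every $n \in \mathbb{N}$.

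Next, to prove density, fix $x,y \in \partial_g V_2$ and any $r \in \mathbb{N}$; it suffices to find $n \in \mathbb{N}$ with $d(n \cdot x, y) \leq 2^{-r}$, that is, $x + n \equiv y \pmod{2^r}$. Writing $y - x = \sum_{k\geq 0} a_k 2^k \in \mathbb{Z}_2$ with $a_k \in \{0,1\}$, choose
\[
 n = \sum_{k=0}^{r-1} a_k 2^k \in \{0,1,\dots,2^r - 1\} \subset \mathbb{N}.
\]
Then $n \equiv y - x \pmod{2^r}$, hence $n \cdot x \equiv y \pmod{2^r}$, as required. Since $r$ was arbitrary and for each $r$ such an $n$ exists with $n \leq 2^r - 1$, we may in fact extract an increasing sequence $n_r \to \infty$ with $n_r \cdot x \to y$ (if the chosen $n$ is too small, replace it by $n + j\cdot 2^r$ for large $j$, which still satisfies the congruence). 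Thus $\mathbb{N}\cdot x$ is dense in $\partial_g V_2$.

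There is no essential obstacle: once the identification with $\mathbb{Z}_2$ and its $2$-adic metric is in place, the claim reduces to the elementary fact that the forward orbit of any point under the odometer is equidistributed modulo every power of $2$. The only step that deserves care is confirming that the continuous extension to the Gromov boundary really is the $2$-adic successor at the exceptional fixed sequence $(1,1,1,\ldots)$, and this is a direct continuity check.
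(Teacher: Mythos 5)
Your proof is correct and follows essentially the same route as the paper: both arguments exploit that the boundary action is the $2$-adic odometer $x \mapsto x+1$ and produce a natural number $n$ congruent to $y-x$ modulo $2^r$ so that the first $r$ digits of $n\cdot x$ agree with those of $y$. The paper merely packages this as two successive additions (first $a$ to clear the low digits of $x$, then $b$ to install those of $y$) instead of invoking $\mathbb{Z}_2$ and $2$-adic subtraction explicitly, while you are slightly more careful in checking that the boundary extension really is the $2$-adic successor.
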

\begin{proof}
 Let $x\in \partial_g V_2$ be given. For any point $y$ of 
$\partial_g V_2$ and $\epsilon > 0$, we will show that there exists an
 integer $n$ such that $d(n\cdot x, y) < \epsilon$.
 Set $N > - \frac{\log \epsilon }{\log 2}$ and $x=(i_k)_{k=0}^\infty$. We
choose $a = 2^{N+1} - \sum_{k=0}^{N}i_k2^k$, then we have 
\[
 a \cdot x = (\cdots, \overbrace{0,\cdots,0}^{N+1}).
\]
 Set $y = (j_k)_{k=0}^\infty$ and $b = \sum_{k=0}^{N}j_k2^k$. Then we
have
\[
 (a + b) \cdot x = (\cdots, j_N,\cdots,j_0).
\]
It follows that $d((a+b) \cdot x, y) < 2^{-N} < \epsilon$.
\end{proof}
Especially, the action $\N$ on $\partial_g V_2$ does not have any fixed
points. On the other hand, the orbit of $x \in V_2$ is not a coarse
fixed point. Thus by Proposition \ref{prop:uniformly_discreate} and
Theorem \ref{th:main_theorem}, the orbit of $x \in V_2$ can never be
bornologous.

\section*{Appendix}
\subsection*{Metric of the cone and its compactification}
 Let $M$ be a compact path metric space. The cone $CM$ on $M$ is the
 quotient space 
 $M\times [0,\infty) / \sim$ where $(x,t) \sim (x',t')$ if and only if
 either $x=x'$ and $t=t'$ or $t=t'=0$. According to Roe (section 3.6 of
 \cite{MR1147350}), we construct a metric on $CM$. Let 
$\lambda \colon [0,\infty) \rightarrow [0,\infty)$ be a continuous
 function with $\lambda(t) = 0$ if and only if $t=0$. 
If $\gamma$ is a path in $CM$, we define its
 $\lambda$-length, $l_\lambda(\gamma)$, to be 
\[
 \sup \left\{\sum_{j=0}^{n-1}\Big(|t_j-t_{j+1}| 
 + \max\{\lambda(t_j), \lambda(t_{j+1})\}d(x_j,x_{j+1})\Big)\right\}
\]
where the supremum is taken over all finite sequence $(x_j,t_j)_{j=0}^n$
 of points on the path $\gamma$ (with $(x_0,t_0)$ and $(x_n,t_n)$ being
 two end points).
We define a metric $d_\lambda$ on $CM$ by $d_\lambda((x,t),(x',t')) =
 \inf l_\lambda(\gamma)$ where the infimum is taken over all path
 joining $(x,t)$ and $(x',t')$. The metric $d_\lambda$ is compatible
 with the topology of $CM$ and it becomes a proper metric space
 (Proposition 3.47 of \cite{MR1147350}).
\begin{proposition}
 Suppose that $\lambda$ is an increasing, unbounded function, then the
 compactification $\overline{CM} = M \times [0,1]/ \sim$ is a coarse
 compactification. 
\end{proposition}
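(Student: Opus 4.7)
The plan is to verify the defining property of a coarse compactification directly. The boundary of $\overline{CM}$ is $\partial(CM) = M \times \{1\}$, which I identify with $M$. Given a controlled set $E \subset CM \times CM$ and a sequence $((x_n, t_n), (x'_n, t'_n))$ in $E$ whose images in $\overline{CM} \times \overline{CM}$ converge to boundary points $(x^{\ast}, 1)$ and $(y^{\ast}, 1)$, I must show $x^{\ast} = y^{\ast}$. Unwinding convergence in $\overline{CM}$ via the embedding $(x,t) \mapsto (x, \tau(t))$, this is exactly the assertion: $t_n, t'_n \to \infty$ together with a uniform bound $d_\lambda((x_n, t_n), (x'_n, t'_n)) \leq R$ force $d_M(x_n, x'_n) \to 0$.

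The key estimate I would prove is the metric inequality: if $d_\lambda((x,t),(x',t')) \leq R$, then
\[
d_M(x, x') \;\leq\; \frac{R+1}{\lambda\bigl((t + t' - R - 1)/2\bigr)}.
\]
To obtain it, pick a path $\gamma$ in $CM$ from $(x,t)$ to $(x',t')$ with $l_\lambda(\gamma) \leq R + 1$, and let $t_{\min}$ be the infimum of the $t$-coordinate along $\gamma$. Choose any finite sample $(x_j, t_j)_{j=0}^n$ of $\gamma$ that starts at $(x,t)$, ends at $(x',t')$, and includes an intermediate point whose $t$-coordinate is close to $t_{\min}$. From the $\sum |t_j - t_{j+1}|$ part of the defining sum, this partition contributes at least $(t - t_{\min}) + (t' - t_{\min})$, giving $t_{\min} \geq (t + t' - R - 1)/2$. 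From the $\sum \max\{\lambda(t_j), \lambda(t_{j+1})\}\, d(x_j, x_{j+1})$ part, since $\lambda$ is increasing and every $t_j \geq t_{\min}$, this contribution is at least $\lambda(t_{\min}) \sum d_M(x_j, x_{j+1}) \geq \lambda(t_{\min})\, d_M(x, x')$ by the triangle inequality in $M$. Both sums are bounded by $l_\lambda(\gamma) \leq R + 1$, which yields the claimed inequality.

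With the estimate in hand, the proposition follows directly: because $\lambda$ is unbounded and increasing, $\lambda\bigl((t_n + t'_n - R - 1)/2\bigr) \to \infty$ as $t_n, t'_n \to \infty$, so $d_M(x_n, x'_n) \to 0$ and therefore $x^{\ast} = y^{\ast}$.

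The main technical obstacle is the bookkeeping with the supremum in the definition of $l_\lambda$: the partition points are required to lie on $\gamma$, not to be chosen arbitrarily, so I need a single partition that simultaneously (i) includes a point whose $t$-coordinate is within $\varepsilon$ of $t_{\min}$ and (ii) has $\sum d_M(x_j, x_{j+1})$ close to the full $M$-variation. A standard refinement argument, beginning with the two endpoints together with a near-minimizer of the $t$-coordinate and then refining as needed, handles both requirements; letting $\varepsilon \to 0$ and absorbing the slack into the $R+1$ gives the stated bounds.
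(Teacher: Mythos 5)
Your argument is correct, and at heart it runs along the same lines as the paper's: both proofs reduce the coarse-compactification condition to showing that a uniform $d_\lambda$-bound on pairs whose heights tend to infinity forces the $M$-coordinates to approach each other, using that $\lambda$ is increasing and unbounded. The difference is in how that reduction is justified. The paper argues by contradiction: assuming $x \neq y$, it picks $N_2$ with $\lambda(t) > r_E/(d(x,y)-\epsilon)$ for $t > N_2$ and then simply asserts $d_\lambda(\mathbf{x}_n,\mathbf{y}_n) > r_E$, leaving implicit the point your proof makes explicit, namely that a connecting path cannot cheat by descending toward the cone point where $\lambda$ is small: if it dips below height $t_{\min}$ it must pay $(t-t_{\min})+(t'-t_{\min})$ in vertical length, and if it stays high it pays $\lambda(t_{\min})\,d_M(x,x')$ horizontally. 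Your quantitative inequality
\[
d_M(x,x') \;\leq\; \frac{R+1}{\lambda\bigl((t+t'-R-1)/2\bigr)}
\]
packages both cases at once and is what is really needed to back up the paper's asserted lower bound (as written, the paper's choice of $N_3$ only guarantees the estimate when the path stays above $N_2$; for paths that descend one needs the vertical-cost term, i.e.\ heights large compared to $r_E$ as well). So your route is a direct, slightly more careful version of the same idea; it also yields an explicit modulus for how fast $d_M(x_n,x'_n)\to 0$, which the contradiction argument does not. One small simplification: for the estimate you do not need any refinement bookkeeping --- the three-point partition consisting of the two endpoints and a near-minimizer of the height already gives both lower bounds, since $\sum_j d(x_j,x_{j+1}) \geq d_M(x,x')$ holds for every partition by the triangle inequality.
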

\begin{proof}
 Let $E \subset CM \times CM$ be a controlled set and $r_E$ be a positive
 number such that 
\[
 \sup\{d_\lambda(\mathbf{x},\mathbf{y})
 |(\mathbf{x},\mathbf{y})\in E\} < r_E .
\]
Let $\{(\mathbf{x}_n,\mathbf{y}_n)\} \subset E$ be a sequence such that
 $\bf{x}_n$ and $\bf{y}_n$ tends to $x$ and $y$ in $\partial(CM) \cong M$
 respectively. We denote $\mathbf{x}_n = (x_n, t_n)$ and 
$\mathbf{y}_n = (y_n, s_n)$. It is enough to show that $x = y$ (see
 Theorem 2.27 of \cite{MR2007488}). 
Suppose that $x \neq y$. Since $x_n$ and $y_n$ converge to $x$ and $y$
 respectively, for any $\epsilon > 0$ there exists $N_1 > 0$ such that,
 for all $n > N_1$, $x_n$ and $y_n$ satisfy 
 $d(x_n,y_n) > d(x,y) - \epsilon$.
Because $\lambda$ is an increasing, unbounded function, there exists
 $N_2 > 0$ such that, for all $t > N_2$,
\[
 \lambda(t) > \frac{r_E}{d(x,y)-\epsilon}.
\]
Since $\bf{x}_n$ tends to infinity, there exists $N_3 > 0$ such that,
 for all $n > N_3$, the second component $t_n$ of $\bf{x}_n$ satisfies
$t_n > N_2$.
Thus, if we choose an integer $n$ satisfying $n > \max\{N_1,N_3\}$, then
 we have $d_\lambda(\mathbf{x}_n,\mathbf{y}_n) > r_E$, this contradicts the
 definition of $r_E$.
\end{proof}

\bibliographystyle{amsplain}

\providecommand{\bysame}{\leavevmode\hbox to3em{\hrulefill}\thinspace}
\providecommand{\MR}{\relax\ifhmode\unskip\space\fi MR }
\providecommand{\MRhref}[2]{%
  \href{http://www.ams.org/mathscinet-getitem?mr=#1}{#2}
}
\providecommand{\href}[2]{#2}

\bigskip
\address{ 
Department of Mathematics\\
Kyoto University \\
Kyoto 606-8502\\
Japan
}
{tomo\_xi@math.kyoto-u.ac.jp}
\end{document}